\theoremstyle{definition}
\newtheorem{Theorem}{Theorem}[section]
\newtheorem{Lemma}[Theorem]{Lemma}
\newtheorem{Example}[Theorem]{Example}
\newtheorem{Remark}[Theorem]{Remark}
\newtheorem{Corollary}[Theorem]{Corollary}
\newtheorem{Proposition}[Theorem]{Proposition}
\newtheorem{Fact}[Theorem]{Fact}
\newcommand{\C}{\mathbb{C}}
\newcommand{\R}{\mathbb{R}}
\newcommand{\Z}{\mathbb{Z}}
\newcommand{\ZZ}{\mathsf{Z}}
\newcommand{\mc}[1]{\mathcal{#1}} 
\newcommand{\mt}[1]{\text{#1}}
\newcommand{\msf}[1]{\mathsf{#1}}
\DeclareMathOperator{\Ima}{Im}
\newcommand{\aff}[1]{#1_{\text{aff}}}
\newcommand{\ant}[1]{#1_{\text{ant}}}
\begin{document}

\title{Toroidal affine Nash groups}

\author{Mahir Bilen Can}

\address{Tulane University, New Orleans}

\email{mcan@tulane.edu}

\date{September 29, 2015}

\subjclass[2010]{22E15, 14L10, 14P20}

\keywords{Real algebraic groups, anti-affine algebraic groups, Rosenlicht's theorem, affine Nash groups, abelian
groups.}
\maketitle

\begin{abstract}
A toroidal affine Nash group is the affine Nash group analogue of an anti-affine algebraic group. 
In this note, we prove analogues of Rosenlicht's structure and decomposition theorems: 
(1) Every affine Nash group $G$ has a smallest normal affine Nash subgroup $H$ such that 
$G/H$ is an almost linear affine Nash group, and this $H$ is toroidal. 
(2) If $G$ is a connected affine Nash group, then there exist a largest toroidal affine Nash subgroup $\ant{G}$ and 
a largest connected, normal, almost linear affine Nash subgroup $\aff{G}$. 
Moreover, we have $G=\ant{G}\aff{G}$, and $\ant{G}\cap \aff{G}$ contains $\aff{(\ant{G})}$ as an affine Nash subgroup of finite index.
\end{abstract}

\section{Introduction}

In this manuscript we are concerned with the structure theory of affine Nash groups. 
Building blocks of these objects are semi-algebraic sets, which, by definition, are finite unions of finite intersections of the sets of the form 
$$
\{x\in \R^n:\ f(x)=0, g_1(x) >0, g_2(x) > 0, \dots, g_k(x) > 0 \},
$$ 
where $f,g_1,\dots, g_k$ are polynomials on $\R^n$.
An affine Nash manifold in $\R^n$ is a semi-algebraic analytic submanifold of $\R^n$, 
and an affine Nash group is an affine Nash manifold in $\R^n$ with a compatible group structure.
In the category of affine Nash manifolds, a morphism between two objects is an analytic mapping with a semi-algebraic graph.
Such functions are called Nash mappings.
For a detailed survey of Nash category and related objects, we recommend Shiota's~\cite{Shiota1,Shiota2}.

Thanks to Hrushovski and Pillay, we know that affine Nash groups are precisely the finite covers of 
real algebraic groups (see~\cite{HP1, HP2}).
It is a paradoxical phenomenon in realm of Nash groups that even though they are defined over ordered 
fields, hence their geometry is more lucid compared to algebraic groups, 
reasonable geometric notions in Nash groups need to be defined more abstractly. 
Precisely this counter intuitiveness hinders the speed of discoveries in this field. 
For example, in literature the appropriate analogue of a linear algebraic group among affine Nash groups has only been considered recently in~\cite{Sun} by Sun, 
who coined the name {\em almost linear affine Nash group}.
These are Nash groups which admit a real, finite dimensional Nash representation with a finite kernel.

An important structure theorem in algebraic group theory, which is originally due to Chevalley states that 
a connected algebraic group $\msf{G}$ over a perfect field $k$ contains a unique closed connected normal affine algebraic 
subgroup $\msf{H}$ for which the quotient group $\msf{G}/\msf{H}$ is an abelian variety. 
A modern proof of this fundamental result is given by Conrad in~\cite{Conrad}.

In a recent manuscript,~\cite{FangSun}, Fang and Sun show how to generalize 
Chevalley's theorem in Nash context by defining ingeniously what the correct analogue of an abelian variety should be; 
an affine Nash group is said to be {\em complete} if it has no non-trivial connected almost linear affine Nash subgroup. 
An {\em abelian Nash manifold} is a connected, complete affine Nash group.
As an instructive non-example, consider $\mt{SO}(2,\R)$, the special orthogonal group of $2\times 2$ matrices with entries from $\R$.
Certainly, $\mt{SO}(2,\R)$ is a commutative, connected linear algebraic group, hence it is an almost linear Nash group. 
The point is that a commutative affine Nash group is not necessarily an abelian Nash manifold.
(A basic example of abelian Nash groups is given in the sequel.)

\begin{Theorem}[\cite{FangSun}, Theorem 1.2.]\label{T:Chevalley}
Let $G$ be a connected affine Nash group. 
Then there exists a unique connected, normal, almost linear affine Nash subgroup $H$ of $G$ such that $G/H$ is an abelian Nash manifold.
\end{Theorem}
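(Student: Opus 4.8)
The plan is to deduce the statement from its algebraic counterpart, Chevalley's theorem, by transporting the algebraic structure across the finite cover supplied by Hrushovski--Pillay. First I would invoke \cite{HP1,HP2} to fix a connected real algebraic group $\msf{G}$ together with a surjective Nash homomorphism $\varphi\colon G\to \msf{G}(\R)^0$ whose kernel is finite. Applying Chevalley's theorem to $\msf{G}$ over the perfect field $\R$ yields a unique connected normal affine algebraic subgroup $\msf{L}\trianglelefteq \msf{G}$ with $\msf{A}:=\msf{G}/\msf{L}$ an abelian variety. The candidate subgroup is then $H:=\bigl(\varphi^{-1}(\msf{L}(\R))\bigr)^0$, the identity component of the preimage; it is a connected Nash subgroup, and it is normal in $G$ because $\msf{L}$ is normal in $\msf{G}$ and preimages of normal subgroups, together with their identity components, are again normal.

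Next I would check the two defining properties of $H$. That $H$ is almost linear is the easy half: the restriction $\varphi|_H\colon H\to \msf{L}(\R)^0$ is a finite cover, and $\msf{L}(\R)^0$ is almost linear since $\msf{L}$ is affine; composing the faithful algebraic representation of $\msf{L}$ with $\varphi|_H$ produces a finite-dimensional real Nash representation of $H$ whose kernel, being a finite extension of a finite group, is finite. (In particular a finite cover of an almost linear Nash group is almost linear, even though it need not be faithfully representable.) The substance is showing that $G/H$ is an abelian Nash manifold. The map $\varphi$ descends to a Nash homomorphism $\bar\varphi\colon G/H\to \msf{A}(\R)^0$ which, after accounting for the finite discrepancy between $(\msf{G}/\msf{L})(\R)$ and $\msf{G}(\R)/\msf{L}(\R)$, is again a finite cover; so $G/H$ is a finite cover of the identity component of the real points of the abelian variety $\msf{A}$. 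Since finite covers of complete Nash groups are complete---the image in the base of a connected almost linear subgroup is again connected and almost linear, hence trivial---it suffices to prove that $\msf{A}(\R)^0$ is itself complete.

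The heart of the argument, and the step I expect to be the main obstacle, is this completeness claim: $\msf{A}(\R)^0$ has no nontrivial connected almost linear Nash subgroup. Here the Nash/algebraic dictionary must be used carefully. Given such a subgroup $M$, its Zariski closure in $\msf{A}$ is a connected algebraic subgroup, hence, being a closed connected subgroup of a complete variety, an abelian subvariety $\msf{B}$; and $M$ is almost linear, so up to finite covers its defining representation furnishes an algebraic homomorphism from $\msf{B}$ to a linear algebraic group with finite kernel. Because an abelian variety is anti-affine---every morphism to an affine group is constant---this forces $\msf{B}$ to be finite, whence $\msf{B}$ is trivial and $M$, being connected and contained in a finite group, is trivial. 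The delicate points are the rigidity input that Nash homomorphisms between real points of algebraic groups are, up to finite data, algebraic, so that the anti-affineness of abelian varieties can be brought to bear, and the closure properties of the almost linear class---notably that quotients of almost linear affine Nash groups are almost linear---which I would cite from Sun's work \cite{Sun}.

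Finally, uniqueness is formal. If $H'$ is another connected normal almost linear Nash subgroup with $G/H'$ an abelian Nash manifold, then the image of $H$ under the projection $G\to G/H'$ is a quotient of the almost linear group $H$, hence connected and almost linear, hence trivial by completeness of $G/H'$; thus $H\subseteq H'$, and the symmetric argument gives $H'\subseteq H$. The only nonformal ingredient here is again the closure of the almost linear class under Nash quotients.
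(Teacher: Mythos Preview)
The paper does not prove this theorem; it is quoted from \cite{FangSun} as an established result and used as input to the paper's own arguments, so there is no in-paper proof to compare against. That said, your outline via algebraization and the algebraic Chevalley theorem is correct and is precisely the route Fang--Sun take; it also mirrors the technique the present paper uses in the proofs of Theorems~\ref{T:Rosenlicht} and~\ref{T:Rosenlicht decomposition}, where one pulls back the relevant algebraic subgroup along an algebraization and takes the identity component.

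One simplification worth noting: your hands-on argument for the completeness of $G/H$ (taking Zariski closures inside the abelian variety and invoking anti-affineness) works but is more laborious than necessary. The paper, following \cite{FangSun}, records as Fact~\ref{Facts}(2) that an affine Nash group is complete if and only if any algebraization of it is complete as an algebraic variety. Once you observe that the induced map $G/H\to \msf{G}/\msf{L}$ is an algebraization (this is their Lemma~3.5), completeness of $G/H$ is immediate from completeness of the abelian variety $\msf{G}/\msf{L}$, and the rigidity issue you flag as ``the main obstacle'' dissolves into that black box.
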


It should be noted that Chevalley's structure theorem does not have
a direct generalization to complex Lie groups and the complexification of a Nash group is a complex Lie group. 
Therefore, Theorem~\ref{T:Chevalley} is one of the best structure results in this regard.
\begin{Remark}[Michel Brion]
A natural extension of Chevalley's theorem for complex Lie groups would assert that every complex connected Lie group is
an extension of a complex torus by a complex connected Stein Lie group, in a unique way. But this
fails in the following example: Consider an elliptic curve $E$ and its universal vector extension $G$. 
This is an extension of $E$ by the additive group of $\C$, as in Chevalley's theorem. 
But as a complex Lie group, $G$ is isomorphic to $\C^* \times \C^*$ and hence is Stein (see~\cite{Neeman}).
\end{Remark}

The main goal of this note is to prove another structure theorem for affine Nash groups,
whose origin goes back to the paper~\cite{Rosenlicht} of Rosenlicht.
An algebraic group $\msf{H}$ defined over a field $k$ is called {\em anti-affine}
if every global regular function on $\msf{H}$ is constant. Thus, if we denote by $\mc{O}(\msf{H})$ the ring of global regular functions on $\msf{H}$,
then $\msf{H}$ is anti-affine if and only if $\mc{O}(\msf{H})=k$. 
\begin{Theorem}(\cite{Brion15}, Theorem 1)\label{T:Brion15}
Every algebraic group over a field $k$ has a smallest normal subgroup scheme $\msf{H}$ such that 
the quotient $\msf{G}/\msf{H}$ is affine. In this case, $\msf{H}$ is smooth, connected, and lies in the center of $\msf{G}^0$;
in particular $\msf{H}$ is commutative. Moreover, $\mc{O}(\msf{H})=k$ and $\msf{H}$ is the largest group scheme of 
$\msf{G}$ satisfying this property. The formation of $\msf{H}$ commutes with field extensions.  
\end{Theorem}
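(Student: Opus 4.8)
The plan is to realize $\msf{H}$ as the kernel of the affinization homomorphism and then to read off every assertion from the universal property of affinization. I set $\mathrm{Aff}(\msf{G}):=\mathrm{Spec}\,\mc{O}(\msf{G})$; the comultiplication of $\msf{G}$ turns $\mc{O}(\msf{G})$ into a commutative Hopf algebra, and the tautological map $\alpha\colon \msf{G}\to \mathrm{Aff}(\msf{G})$ is a homomorphism of group schemes that is initial among morphisms from $\msf{G}$ to affine schemes. The two technical inputs I would establish (or cite, following Brion) are: (i) $\mc{O}(\msf{G})$ is a finitely generated $k$-algebra, so that $\mathrm{Aff}(\msf{G})$ is an affine algebraic group; and (ii) $\alpha$ is faithfully flat and its kernel $\msf{H}:=\ker\alpha$ satisfies $\mc{O}(\msf{H})=k$, that is, $\msf{H}$ is anti-affine. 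Granting (i)--(ii), faithful flatness yields $\msf{G}/\msf{H}\cong \mathrm{Aff}(\msf{G})$, which is affine; this already produces a normal subgroup scheme with affine quotient and establishes $\mc{O}(\msf{H})=k$.

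Next I would extract the two extremal characterizations purely formally. If $\msf{N}\trianglelefteq \msf{G}$ is any normal subgroup scheme with $\msf{G}/\msf{N}$ affine, then the quotient map $\msf{G}\to \msf{G}/\msf{N}$ factors through $\alpha$ by the universal property, whence $\msf{N}\supseteq \ker\alpha=\msf{H}$; thus $\msf{H}$ is the smallest normal subgroup scheme with affine quotient. For the opposite extremum, let $\msf{K}\le \msf{G}$ be anti-affine. Its image $\alpha(\msf{K})\le \mathrm{Aff}(\msf{G})$ is simultaneously anti-affine, being the homomorphic image of an anti-affine group, and affine, being a subgroup scheme of an affine group; hence $\alpha(\msf{K})$ is trivial and $\msf{K}\subseteq \ker\alpha=\msf{H}$. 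So $\msf{H}$ is also the largest anti-affine subgroup scheme, which is precisely the asserted maximality for $\mc{O}(-)=k$. Connectedness and smoothness of $\msf{H}$ I would take from the standard structure theory of anti-affine groups (a nontrivial component group or infinitesimal part would contribute non-constant or idempotent global functions, contradicting $\mc{O}(\msf{H})=k$); in particular $\msf{H}\subseteq \msf{G}^0$.

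The delicate step is centrality. After base change to $\bar k$ (legitimate by the field-extension compatibility proved below, and harmless since equality of two morphisms descends), I apply Chevalley's theorem to the smooth connected group $\msf{H}$ to obtain an exact sequence $1\to \msf{L}\to \msf{H}\to B\to 1$ with $\msf{L}$ connected affine and $B$ an abelian variety. Conjugation makes $\msf{G}^0$ act on $\msf{H}$ by automorphisms fixing the origin, hence act on $B$ by group automorphisms; since $\underline{\mathrm{Aut}}(B)$ is étale and $\msf{G}^0$ is connected, this action is trivial. Therefore, for every point $g$ of $\msf{G}^0$, the map $h\mapsto g h g^{-1} h^{-1}$ has trivial image in $B$ and so takes values in $\ker(\msf{H}\to B)=\msf{L}$, giving a morphism $\msf{H}\to \msf{L}$ from an anti-affine variety to an affine one; such a morphism corresponds to a ring map $\mc{O}(\msf{L})\to \mc{O}(\msf{H})=k$ and is therefore constant, with value the image of the identity, namely the identity. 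Hence $g$ centralizes $\msf{H}$ for all $g$, so $\msf{H}$ is central in $\msf{G}^0$; since $\msf{H}\subseteq \msf{G}^0$, it is in particular commutative.

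Finally, compatibility with field extensions follows because $\mc{O}$ commutes with the flat base change $\mathrm{Spec}\,K\to \mathrm{Spec}\,k$ for the quasi-compact, quasi-separated scheme $\msf{G}$: one has $\mc{O}(\msf{G}_K)=\mc{O}(\msf{G})\otimes_k K$, so $\mathrm{Aff}(\msf{G})_K=\mathrm{Aff}(\msf{G}_K)$ and $\alpha$ base-changes to $\alpha_K$, whence $(\ker\alpha)_K=\ker(\alpha_K)$; that is, the formation of $\msf{H}$ commutes with extension of $k$. The same identity shows that anti-affineness itself is preserved under field extension, which is what justified the reduction to $\bar k$ above. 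I expect the genuine obstacle to be input (ii): proving that the affinization is faithfully flat with anti-affine kernel (equivalently, that $\mc{O}(\msf{G})$ is finitely generated and $\mc{O}(\ker\alpha)=k$). The standard route reduces this to Chevalley's structure theorem together with the facts that a proper connected group variety $B$ has $\mc{O}(B)=k$ and that the relevant extensions remain anti-affine; everything else in the statement then follows formally, as above.
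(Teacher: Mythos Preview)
The paper does not prove this statement; it is quoted as Theorem~1 of Brion's expos\'e~\cite{Brion15} and used as a black box for the Nash-group results that follow. There is therefore no proof in the paper to compare your proposal against.

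For what it is worth, your outline is the standard affinization argument and is essentially Brion's own approach in the cited reference: realize $\msf{H}$ as the kernel of $\alpha\colon \msf{G}\to \mathrm{Spec}\,\mc{O}(\msf{G})$, and read off minimality, maximality among anti-affine subgroups, and field-extension compatibility from the universal property plus flat base change. Your centrality argument (trivialize the $\msf{G}^0$-action on the abelian-variety quotient via connectedness, then kill the commutator map $\msf{H}\to\msf{L}$ because an anti-affine group has no nonconstant morphism to an affine scheme) is also the standard one. You correctly identify the genuine content as inputs (i)--(ii): finite generation of $\mc{O}(\msf{G})$ and faithful flatness of $\alpha$ with $\mc{O}(\ker\alpha)=k$. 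Those are nontrivial and are where Brion invokes the structure theory; once granted, the rest is formal, as you say.
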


There is a natural extension of the notion of anti-affineness to complex Lie groups; they are called {\em Cousin groups}, or more
suggestively as {\em toroidal groups}. A complex Lie group $G$ is said to be toroidal if every global holomorphic 
function on $G$ is constant. A toroidal group admits an explicit description as a $(\C^*)^m$-fiber bundle over a torus,
hence the nomenclature follows. For more on these objects we recommend~\cite{AbeKopfermann}.
Although these groups are at the center of investigation for more than a century (starting with P. Cousin's work from 1910), 
to the best of our knowledge, in the literature there is no analogue of Rosenlicht's result for complex Lie groups.

Motivated by the results that are mentioned above, we define toroidal affine Nash groups as follows.
An affine Nash group is said to be anti-linear if it has no quotient Nash group which is almost linear and positive dimensional. 
Then a {\em toroidal affine Nash group} is defined to be a connected anti-linear affine Nash group.  
(We thank Bingyong Sun for proposing this definition of toroidal affine Nash groups.)
Examples of toroidal affine Nash groups include abelian Nash groups. 
\begin{Example}
Let $\alpha$ be a nonzero real number. There exists unique Nash group structure on $(\R/\Z)_{\alpha}:=\R/\Z$
such that the Weierstrass elliptic function $\wp=\wp_\alpha$ defined by 
$$
\wp (x) := \frac{1}{x^2} + \sum_{\omega \in \Z + \alpha \sqrt{-1}\Z,\ \omega \neq 0} \left( \frac{1}{(x-\omega)^2} - \frac{1}{\omega^2} \right)
$$ 
is a Nash function on $(\R/\Z)_{\alpha} - \{ 0 \}$.
See~\cite{MS}, Section 4. It is shown in~\cite{FangSun} that $(\R/\Z)_{\alpha}$ is an abelian Nash group, hence it is a toroidal affine Nash group. 
The basic idea of the proof is to pass to an algebraization (a concept that we review in the sequel) of $(\R/\Z)_{\alpha}$ which results in an elliptic curve. 
\end{Example}

Our first main result related to toroidal affine Nash groups is the following:

\begin{Theorem}\label{T:Rosenlicht}
Every affine Nash group $G$ has a smallest normal affine Nash subgroup $H$ such that 
$G/H$ is an almost linear affine Nash group, and this $H$ is toroidal. 
Moreover, $H$ lies in the center of the identity connected component $G^0$ of $G$; in particular $H$ is commutative. 
\end{Theorem}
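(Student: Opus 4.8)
The plan is to reduce the statement to its algebraic counterpart, Theorem~\ref{T:Brion15}, by means of the Hrushovski--Pillay description of affine Nash groups as finite covers of real algebraic groups. Concretely, I would fix an algebraization of $G$: a real algebraic group $\msf{G}$ together with a surjective Nash homomorphism $\pi\colon G\to \msf{G}$ whose kernel $F$ is finite (throughout I treat $\msf{G}$ and its associated Nash group interchangeably, via the algebraization machinery reviewed in the sequel). The backbone of the argument is a dictionary between the two categories. On the one hand, by Sun's characterization an affine Nash group is almost linear precisely when it is a finite cover of a linear, equivalently affine, real algebraic group; consequently a Nash quotient $G/N$ is almost linear if and only if its algebraization is affine. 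On the other hand, I would check that a connected finite cover of an anti-affine algebraic group is anti-linear, hence toroidal: a positive-dimensional almost linear Nash quotient would algebraize to a nontrivial affine quotient of the (anti-affine) algebraization, and since a quotient of an anti-affine group is anti-affine while a group that is both affine and anti-affine is trivial, this is impossible. Under this correspondence, normal Nash subgroups of $G$ with almost linear quotient match, up to finite kernels and identity components, the normal algebraic subgroups of $\msf{G}$ with affine quotient.

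With the dictionary in hand, I would apply Theorem~\ref{T:Brion15} to $\msf{G}$ to obtain the smallest normal subgroup $\msf{H}\subseteq\msf{G}$ for which $\msf{G}/\msf{H}$ is affine, and recall that $\msf{H}$ is smooth, connected, anti-affine, and central in $\msf{G}^0$. I would then set $H:=\bigl(\pi^{-1}(\msf{H})\bigr)^0$, the identity component of the preimage, which is a connected normal affine Nash subgroup of $G$. Since $F$ is finite, $H$ is a finite cover of $\msf{H}$, hence anti-linear and therefore toroidal; and $G/H$ maps with finite kernel onto the affine group $\msf{G}/\msf{H}$, so it is almost linear.

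For minimality, let $N$ be any normal affine Nash subgroup with $G/N$ almost linear. Passing to algebraizations, the image of $N$ has affine-quotient closure, so $\msf{G}/\overline{\pi(N)}$ is affine, whence $\msf{H}\subseteq\overline{\pi(N)}$ by the minimality clause of Theorem~\ref{T:Brion15}. Pulling back along $\pi$ and using that the finite kernel $F$ affects neither dimensions nor identity components, I would conclude $H\subseteq N^0\subseteq N$; thus $H$ is the smallest normal affine Nash subgroup with almost linear quotient. Finally, centrality in $G^0$ follows softly: the commutator map $G^0\times H\to G$, $(g,h)\mapsto ghg^{-1}h^{-1}$, is Nash, its image lands in the finite group $F$ because $\pi(G^0)\subseteq\msf{G}^0$ and $\msf{H}$ is central in $\msf{G}^0$, and its connected source is sent to $e$ at the identity, so the image is trivial. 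In particular $H$, lying in $G^0$, is commutative.

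The step I expect to be the main obstacle is establishing this dictionary rigorously, rather than the formal deductions that follow from it. In particular, matching ``almost linear'' with ``affine'' through algebraization in the presence of finite kernels, transferring anti-affineness of $\msf{G}$ (or of $\msf{H}$) to anti-linearity of the corresponding Nash group, ensuring that algebraization is compatible with the quotients in play, and keeping careful track of real points and component groups so that the correspondence between normal subgroups with the respective quotient properties is exact, are the delicate points. By contrast, once these compatibilities are in place, existence, minimality, centrality, and commutativity follow almost formally, the last two from the observation that a continuous, indeed Nash, map from a connected group into a finite group is constant.
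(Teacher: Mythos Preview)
Your approach is essentially the same as the paper's: fix an algebraization $\phi\colon G\to\msf{G}$, take $\msf{H}=\ant{\msf{G}}$ from Theorem~\ref{T:Brion15}, and set $H=(\phi^{-1}(\msf{H}))^0$; toroidality of $H$ and almost-linearity of $G/H$ then follow from the algebraization dictionary exactly as you outline (the paper packages the former as Lemma~\ref{L:converse}).

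The one genuine point of divergence is the minimality argument. You push a competitor $N$ forward to $\overline{\phi(N)}\subseteq\msf{G}$, invoke minimality of $\msf{H}$ there, and pull back; this works, but it requires checking that $G/N\to\msf{G}/\overline{\phi(N)}$ is an algebraization (finite kernel, dense image) and that $(\phi^{-1}(\overline{\phi(N)}))^0=N^0$, both of which come down to dimension counts and component-finiteness. The paper instead argues intrinsically: $H/(H\cap N)$ embeds in $G/N$, hence is almost linear, but is also toroidal as a quotient of $H$; an algebraization of $H/(H\cap N)$ is then both affine and anti-affine, so trivial, giving $H\subseteq N$ directly. The paper's route avoids tracking Zariski closures of Nash images and the attendant compatibility checks you flag as the main obstacle; your route has the virtue of making the reduction to Theorem~\ref{T:Brion15} completely transparent. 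For centrality, your commutator-into-$F$ argument is a clean alternative to the paper's appeal to Corollary~\ref{C:central}.
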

We prove this theorem in the next two sections by a sequence of lemmas.

Let $\msf{G}$ be a connected algebraic group defined over a perfect field $k$. 
By Theorem~\ref{T:Brion15} we know that $\msf{G}$ has a smallest normal subgroup $\ant{\msf{G}}$ such that $\msf{G}/\ant{\msf{G}}$ is an affine algebraic group and $\ant{\msf{G}}$ is anti-affine. 
It turns out $\ant{\msf{G}}$ is the kernel of the natural morphism $\msf{G}\rightarrow \mt{Spec}\ \mc{O}(\msf{G})$. See Section 3.2~\cite{Brion15}. 
In a sense orthogonal to $\ant{\msf{G}}$ there exists a smallest connected affine subgroup $\aff{\msf{G}}\subseteq \msf{G}$ such that 
$\msf{G}/\aff{\msf{G}}$ is an abelian variety (whose existence is guaranteed by Chevalley's theorem). 
Thus, one has the {\em Rosenlicht decomposition}: $\msf{G}=\aff{\msf{G}}\ant{\msf{G}}$ and $\ant{\msf{G}}\cap \aff{\msf{G}}$ contains 
$\aff{(\ant{\msf{G}})}$ as an algebraic subgroup of finite index. 
For a modern proof of this fundamental result and more (including another proof of Chevalley's structure theorem) we recommend the excellent expos\'e~\cite{Brion15} by Brion. 
Our second main result is an analogous decomposition for affine Nash groups.

\begin{Theorem}\label{T:Rosenlicht decomposition}
If $G$ is a connected affine Nash group, then there exist a largest toroidal affine Nash subgroup $\ant{G}$ and 
a largest connected almost linear affine Nash subgroup $\aff{G}$. 
Moreover, the following hold true
\begin{itemize}
\item $G=\ant{G}\aff{G}$;
\item $\ant{G}\cap \aff{G}$ contains $\aff{(\ant{G})}$ as an affine Nash subgroup of finite index.
\end{itemize}
\end{Theorem}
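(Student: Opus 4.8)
The plan is to build the decomposition by combining the two structure theorems already available to us: the Nash Chevalley theorem (Theorem~\ref{T:Chevalley}) and the Nash Rosenlicht theorem (Theorem~\ref{T:Rosenlicht}). These are mutually dual in the sense that one produces a largest toroidal quotient and the other produces a smallest almost linear quotient. First I would define $\ant{G}$ using Theorem~\ref{T:Rosenlicht}: that theorem gives a smallest normal affine Nash subgroup $H$ with $G/H$ almost linear, and it asserts $H$ is toroidal. I would set $\ant{G}:=H$ and then verify that it is in fact the \emph{largest} toroidal affine Nash subgroup. For this, if $K\subseteq G$ is any toroidal subgroup, its image in the almost linear quotient $G/\ant{G}$ is simultaneously toroidal (being a quotient of a toroidal group) and almost linear (being a subgroup of an almost linear group); since a positive-dimensional group cannot be both anti-linear and almost linear, the image must be trivial, forcing $K\subseteq \ant{G}$. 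Dually, I would set $\aff{G}$ to be the connected, normal, almost linear subgroup furnished by Theorem~\ref{T:Chevalley}, and a parallel argument—projecting any connected almost linear subgroup into the abelian Nash manifold $G/\aff{G}$, where it must die because an almost linear group has no nontrivial complete quotient—shows $\aff{G}$ is the largest such subgroup.

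Next I would establish the product formula $G=\ant{G}\aff{G}$. The natural map $\ant{G}\to G/\aff{G}$ lands in an abelian Nash manifold, and I expect this map to be surjective: its image is the anti-linear part of $G/\aff{G}$, but $G/\aff{G}$ is already complete (hence anti-linear), so the composite $\ant{G}\to G/\aff{G}$ is onto. Equivalently, since $G/\ant{G}$ is almost linear while $G/\aff{G}$ is complete, the subgroups $\ant{G}$ and $\aff{G}$ generate $G$; concretely, the image of $\ant{G}$ under $G\to G/\aff{G}$ exhausts the target, which translates into $G=\ant{G}\aff{G}$ as sets of group elements. Because $\ant{G}$ is central in $G^0$ (Theorem~\ref{T:Rosenlicht}) and $G$ is connected, the product $\ant{G}\aff{G}$ is automatically a subgroup, so surjectivity of the projection gives the equality.

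Finally, for the finite-index claim I would analyze the subgroup $\ant{G}\cap\aff{G}$. By definition $\aff{(\ant{G})}$ is the smallest connected almost linear subgroup of the toroidal group $\ant{G}$ with abelian-Nash-manifold quotient; since $\aff{(\ant{G})}$ is both almost linear and contained in $\ant{G}\subseteq G$, it lies in $\aff{G}$ (using that $\aff{G}$ is the largest connected almost linear subgroup, modulo a connectedness check), hence $\aff{(\ant{G})}\subseteq \ant{G}\cap\aff{G}$. For the index, I would pass to the quotient $\ant{G}/\aff{(\ant{G})}$, which is an abelian Nash manifold by construction, and show that the image of $\ant{G}\cap\aff{G}$ there is finite; this follows because that image is simultaneously the image of an almost linear group (inside a complete group, where such images are finite) and lives in the abelian Nash manifold $\ant{G}/\aff{(\ant{G})}$. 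The main obstacle I anticipate is precisely this last step: controlling the index requires knowing that the relevant intersection maps to a \emph{finite} subgroup of an abelian Nash manifold, which hinges on the fact that a morphism from an almost linear Nash group to an abelian Nash manifold has finite image. I would prove this by algebraizing both groups—using the Hrushovski--Pillay correspondence with real algebraic groups—and importing the algebraic Rosenlicht decomposition, where the analogous intersection $\ant{\msf{G}}\cap\aff{\msf{G}}$ is known to contain $\aff{(\ant{\msf{G}})}$ with finite index; the finite-cover nature of the Nash-to-algebraic passage should preserve finiteness of the index.
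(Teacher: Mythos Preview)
Your proposal is correct and follows the same overall architecture as the paper: define $\ant{G}$ via Theorem~\ref{T:Rosenlicht}, define $\aff{G}$ via Theorem~\ref{T:Chevalley}, and verify maximality of each by projecting an arbitrary toroidal (resp.\ connected almost linear) subgroup into the almost linear (resp.\ complete) quotient, where it must die. The paper does exactly this.

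Where you diverge is in the last two steps. For $G=\ant{G}\aff{G}$, the paper fixes an algebraization $\phi:G\to\msf{G}$, invokes the algebraic Rosenlicht decomposition $\msf{G}=\aff{\msf{G}}\ant{\msf{G}}$, and concludes by comparing Lie algebras; similarly, for the finite-index statement it argues that $(\ant{G})_{\mathrm{aff}}$ and $\ant{G}\cap\aff{G}$ have the same Lie algebra because $(\ant{\msf{G}})_{\mathrm{aff}}$ and $\ant{\msf{G}}\cap\aff{\msf{G}}$ do. Your route is more intrinsic: you deduce $G=\ant{G}\aff{G}$ from the observation that $G/(\ant{G}\aff{G})$ is simultaneously a quotient of the almost linear group $G/\ant{G}$ and of the complete group $G/\aff{G}$, hence trivial; and you bound the index by noting that the image of the almost linear group $\ant{G}\cap\aff{G}$ in the abelian Nash manifold $\ant{G}/\aff{(\ant{G})}$ has trivial identity component (by the very definition of completeness) and hence is finite. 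Both approaches are valid; yours avoids the explicit Lie-algebra bookkeeping and stays within the Nash category, while the paper's is shorter once the algebraization machinery is in place. Your fallback to algebraization for the index is in fact unnecessary---your intrinsic argument already suffices.
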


Notice that the group $H$ of Theorem~\ref{T:Rosenlicht} equals the group $\ant{G}$ of Theorem~\ref{T:Rosenlicht decomposition} 
and $H$ of Theorem~\ref{T:Chevalley} equals the group $\aff{G}$ of Theorem~\ref{T:Rosenlicht decomposition}.
We prove this theorem in the last section of our paper.

\vspace{.5cm}

\textbf{Acknowledgements.} We thank Professor Michel Brion for his generous comments and feedback 
on earlier versions of this paper. We thank Professor Binyong Sun for his extremely valuable feedback
and for his suggestion of the correct definition of toroidal affine Nash groups.
Finally, we thank the anonymous referee for her/his careful reading and great suggestions which improved the quality of the paper drastically. 

We acknowledge that the initial seeds of this work were laid during 
{\em Clifford Lectures on Algebraic Groups: Structure and Actions}, which was supported by an 
NSF Workshop Grant (DMS-1522969).

\section{Notation and Preliminaries}

The purpose of this section is to set our notation forth and recall from literature some basic information about 
Nash manifolds. We recommend~\cite{Shiota1},~\cite{Shiota2} and~\cite{BCR}.

Recall that a semi-algebraic set in $\R^n$ is a finite union of finite intersection of the 
sets of the form $\{x\in \R^n:\ p(x) = 0\}$ and $\{y\in \R^n:\ q(y) > 0 \}$, where $p$ and $q$ 
are polynomial functions on $\R^n$. An analytic function $f: U\subseteq \R^n \rightarrow \R$ defined on an open semi-algebraic subset $U$  
is said to be {\em Nash} if its graph in $\R^{n+1}$ is contained in the vanishing set of a non-zero polynomial in $n+1$-variables. In other words, $p(x,f(x))=0$ for some $p(x_1,\dots,x_{n+1})\in \R[x_1,\dots, x_{n+1}]\setminus \{0\}$.

Following~\cite{Shiota2}, we define an affine Nash manifold as follows: A {\em Nash set} $X$ in an open semi-algebraic set $U$ 
is a common zero set of Nash functions on $U$, and a {\em Nash function on $X$} is the restriction of a Nash function from $U$. 
We denote by $\mc{N}(X)$ the ring of global Nash functions on $X$. 
A Nash set $X$ is said to be an affine Nash manifold if for each $x\in X$ the 
local ring of $\mc{N}(X)$ at $x$ is a regular local ring of constant Krull dimension. 
This (local) definition indicates a certain similarity of Nash manifolds to smooth real algebraic varieties.
Indeed, any real affine nonsingular algebraic variety in $\R^n$ is an affine Nash manifold in $\R^n$, and 
any nonsingular real algebraic variety has a Nash manifold structure (see Remark I.3.10 in~\cite{Shiota1}).
Conversely, a Nash manifold is a compact or compactifiable as an analytic manifold (see Theorem IV.1.1 in~\cite{Shiota1}),
and we know from Theorem B in~\cite{AK85} that 
if $X$ is a compact smooth submanifold of $\R^n$, then $X$ is $\varepsilon$-isotopic to a nonsingular real algebraic subset of $\R^{n+1}$.
Here, $\varepsilon$-isotopic means that there are arbitrarily small smooth isotopies to components of real algebraic varieties.

Recall that an affine Nash group is an affine Nash manifold with Nash group operations.
More generally, one defines an (abstract) Nash manifold in terms of Nash coordinate charts, and then defines a Nash group
as an abstract Nash manifold with a compatible group structure. Since we do not need this generality in our paper
we skip its notation. However, we should mention that a complete classification of all connected one-dimensional (not necessarily affine) Nash groups is known. 
This classification is the content of the paper~\cite{MS}.

Affine Nash groups are precisely the finite covers of open subgroups of the real point groups of real algebraic groups,
see \cite[Theorem 2.1]{HP2}. 
However, this does not mean that an affine Nash group is necessarily a real algebraic group. 
Consider, for example, $\mt{SL}(2,\R)$, the special linear group of $2\times 2$ matrices over real numbers. 
It is well-known that even the double-cover of $\mt{SL}(2,\R)$ (so called metaplectic group) has no finite dimensional faithful linear representation.

To distinguish between Nash groups and algebraic groups throughout the text, 
we use ordinary fonts (in capitals) for the former and use sans-serif fonts (in capitals) for the latter. 
If $X$ is an algebraic variety or a scheme, then its ring of regular functions is denoted by $\mc{O}(X)$. 
In particular, if $\msf{G}$ is a real algebraic group, then $\mc{N}(\msf{G}(\R))$ is an algebra over $\mc{O}(\msf{G})$ via the natural homomorphism 
$\mc{O}(\msf{G})\rightarrow \mc{N}(\msf{G}(\R))$.

It is proven in~\cite[Proposition 4.1]{FangSun} that the quotients of affine Nash groups by their Nash subgroups are 
naturally affine Nash manifolds, and when the Nash subgroups are normal, the quotients are naturally affine Nash groups.
This result is obtained by making use of a powerful tool called ``algebraization,'' which is introduced in the same paper. 
An {\em algebraization of a Nash group} $G$ is an algebraic group $\msf{G}$ defined over $\R$ 
together with a Nash group homomorphism $G\rightarrow \msf{G}(\R)$ which has a finite kernel and whose image is 
Zariski dense in $\msf{G}$. 

We use the following facts from~\cite{FangSun} later in the text.
\begin{Fact}\label{Facts}
Let $G$ be a Nash group. 
\begin{enumerate}
\item $G$ has an algebraization $\msf{G}$ if and only if $G$ is an affine Nash group (see \cite[Theorem 3.3]{FangSun}).
\item Let $\msf{G}$ be any algebraization of $G$. Then $G$ is complete if and only if $\msf{G}$ is complete as an algebraic variety (see \cite[Lemma 3.8]{FangSun}) and $G$ is almost linear if and only if $\msf{G}$ is affine as an algebraic variety.
\item If $G$ is a connected complete affine Nash group, then it is compact and commutative as a Lie group 
(see \cite[Corollary 3.9]{FangSun}).
\end{enumerate}
\end{Fact}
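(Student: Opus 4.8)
The plan is to prove the three assertions in sequence, establishing (1) first and then using it to transfer structural properties across the Nash homomorphism $\varphi\colon G\to\msf{G}(\R)$ attached to an algebraization, recalling that $\varphi$ has finite kernel and Zariski-dense image. For (1) I would lean on the Hrushovski--Pillay description \cite[Theorem 2.1]{HP2}, which identifies affine Nash groups with the finite covers of open subgroups of the real point groups of real algebraic groups. If $G$ is affine Nash, this description already furnishes a real algebraic group $\msf{H}$ and a Nash homomorphism $G\to\msf{H}(\R)$ with finite kernel; replacing $\msf{H}$ by the Zariski closure of the image of $G$ produces a genuine algebraization, since the image is then Zariski dense by construction. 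Conversely, given an algebraization $\varphi\colon G\to\msf{G}(\R)$, the target is the real locus of a nonsingular real algebraic variety and hence carries a Nash manifold structure by the remark recorded after the definition of affine Nash manifolds. As $\varphi$ has finite kernel, $\varphi(G)$ has the same dimension as $G$; since its Zariski closure is all of $\msf{G}$, this dimension equals $\dim\msf{G}(\R)$, so $\varphi(G)$ is an open subgroup of $\msf{G}(\R)$ and $G$ is a finite cover of it. The Hrushovski--Pillay description then returns $G$ to the affine Nash category.

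For (2) the basic mechanism is that almost linear and complete are each read off from $\msf{G}$ as ``affine'' and ``complete,'' uniformly in the choice of algebraization (using that any two algebraizations of $G$ are isogenous, so these variety-theoretic properties are intrinsic). If $\msf{G}$ is affine I would fix a closed embedding $\msf{G}\hookrightarrow\mt{GL}_n$ and compose with $\varphi$ to obtain a Nash representation $G\to\mt{GL}_n(\R)$ whose kernel is finite, witnessing that $G$ is almost linear; conversely a Nash representation of $G$ with finite kernel algebraizes to a representation of $\msf{G}$ with finite kernel, which forces $\msf{G}$ to be affine. For the completeness clause I would invoke the classical fact that a connected complete algebraic group is an abelian variety, equivalently one with no nontrivial connected affine subgroup, and then match connected almost linear Nash subgroups of $G$ with connected affine algebraic subgroups of $\msf{G}$ through $\varphi$ by means of (1) and the almost-linear equivalence just established; thus $G$ has a nontrivial connected almost linear Nash subgroup exactly when $\msf{G}$ is not complete.

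For (3), assume $G$ is connected and complete. By (2) any algebraization $\msf{G}$ is a complete connected algebraic group over $\R$, hence an abelian variety, which is commutative with $\msf{G}(\R)$ compact and $\msf{G}(\R)^0\cong(\R/\Z)^{\dim\msf{G}}$. Since $G$ is connected, the open subgroup $\varphi(G)$ of (1) is the identity component $\msf{G}(\R)^0$, so $G$ is a connected finite cover of a compact real torus and is therefore itself a compact commutative Lie group.

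The step I expect to be the main obstacle is (1): pinning down that a finite cover of an open subgroup of $\msf{G}(\R)$ is again affine Nash, and that the Zariski closure in the converse inherits an honest algebraic group structure with the correct kernel and density, which is precisely where the Hrushovski--Pillay correspondence and the semialgebraic control of Nash maps must do the real work. A secondary delicate point is the isogeny-uniqueness of algebraizations underlying the phrase ``any algebraization'' in (2), since it is what makes affineness and completeness of $\msf{G}$ genuine invariants of $G$.
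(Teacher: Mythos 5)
The first thing to note is that the paper does not prove this statement at all: it is labeled a \emph{Fact} and is imported wholesale from Fang--Sun (\cite[Theorem 3.3]{FangSun}, \cite[Lemma 3.8]{FangSun}, \cite[Corollary 3.9]{FangSun}), so there is no internal proof to compare against. Your reconstruction from the Hrushovski--Pillay correspondence is along the right lines and is essentially the route the cited sources take: for (1), passing to the Zariski closure of the image in one direction, and in the other direction using that a finite-kernel image of a semialgebraic group is full-dimensional in $\msf{G}(\R)$ (Zariski closure of a semialgebraic set preserves dimension), hence an open subgroup, which together with Pillay's theorem returns $G$ to the affine Nash category; your argument for (3) (connectedness forces $\phi(G)=\msf{G}(\R)^0\cong(\R/\Z)^{\dim\msf{G}}$, and a connected finite cover of a torus is a compact commutative Lie group) is complete and correct.

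The genuinely thin spots are both in (2), and you should be explicit that they need an argument rather than an assertion. First, the converse of the almost-linear clause: a Nash representation $\rho\colon G\to\mt{GL}(n,\R)$ with finite kernel does not automatically ``algebraize to a representation of $\msf{G}$ with finite kernel''; you need the functoriality of algebraization (this is precisely \cite[Lemma 3.7]{FangSun}, which the paper itself invokes in the proof of Lemma 3.6) to produce an algebraic morphism $\tau\colon\msf{G}\to\msf{G}'$ with $\msf{G}'$ affine, and then a separate argument that $\ker\tau$ is finite: since $\phi(G)$ is an \emph{open} subgroup of $\msf{G}(\R)$ by your dimension count in (1), $(\ker\tau)(\R)\cap\phi(G)$ is an open subgroup of $(\ker\tau)(\R)$ and is finite, which forces $\ker\tau$ to be finite (a positive-dimensional smooth real group with a real point has infinite real locus near the identity). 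Second, the isogeny-uniqueness of algebraizations, which you correctly flag as load-bearing but leave unproved, is established by the same device: given two algebraizations $\phi_i\colon G\to\msf{G}_i$, take the Zariski closure $\msf{G}_3$ of the diagonal image in $\msf{G}_1\times\msf{G}_2$; the projections $\msf{G}_3\to\msf{G}_i$ are surjective with finite kernel by the openness argument above, so affineness and completeness of the algebraization are indeed invariants of $G$. With these two patches, made using tools you already have on the table, your proposal is a correct proof of the Fact; as written, both steps are exactly where the cited Fang--Sun machinery does the work the sketch elides.
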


Although we do not utilize the following fact, it is interesting to observe it:
\begin{Remark}\label{L:universal}
Let $G$ be an affine Nash group. 
Let $\phi : G \rightarrow \msf{G}$ be an algebraization and let $Z$ and $\ZZ$ denote 
the centers of $G$ and $\msf{G}$, respectively. In this case, $\phi(Z)\subseteq \ZZ$. 
Indeed, the image of $\phi$ is dense in $\msf{G}$, therefore, 
the inner automorphism of $\msf{G}$ defined by an element of $\phi(Z)$ is identity on a Zariski dense subset.
\end{Remark}

We end our preparatory section by another useful remark.

\begin{Remark}
It is a consequence of a remarkable result of Pillay~\cite{Pillay} that 
every semialgebraic subgroup of a Nash group $G$ is a closed Nash 
submanifold of $G$. In particular, it follows that the image of a Nash homomorphism 
$\phi: G\rightarrow G'$ is a Nash subgroup of $G$. In particular, it is closed in $G'$.
We use this fact in the sequel without mentioning it.
\end{Remark}

\section{Proof of Theorem~\ref{T:Rosenlicht}}

Recall that an affine Nash group $G$ is called toroidal if it is connected and have no positive dimensional almost linear quotient. 
In particular, the trivial group is toroidal. 
The proofs of the next three results are straightforward so we skip them.

\begin{Proposition}\label{P:image and quotient}
If $f: G \rightarrow G'$ is a Nash group homomorphism from a toroidal affine Nash group $G$, 
then both of $\Ima f$ and $G/ \ker f$ are toroidal affine Nash groups as well.
\end{Proposition}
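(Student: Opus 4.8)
The plan is to reduce the statement to checking the three defining properties of a toroidal group---connectedness, being an affine Nash group, and anti-linearity---and to dispatch the two conclusions at once via the first isomorphism theorem for Nash groups. First I would record that $\ker f$ is a normal Nash subgroup of $G$, so by \cite[Proposition 4.1]{FangSun} the quotient $G/\ker f$ is naturally an affine Nash group; likewise, by Pillay's theorem (the closing Remark of the previous section) the image $\Ima f$ is a closed Nash subgroup of $G'$, hence also an affine Nash group. Since the canonical map $G/\ker f \to \Ima f$ is an isomorphism of affine Nash groups, it suffices to prove that $\Ima f$ is toroidal.

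For connectedness I would use that a Nash homomorphism is in particular continuous, so $\Ima f = f(G)$ is the continuous image of the connected space $G$ and is therefore connected. The remaining---and only substantive---point is anti-linearity, which I would establish by contradiction. Suppose $\Ima f$ admits a surjective Nash homomorphism $q : \Ima f \to Q$ onto a positive-dimensional almost linear affine Nash group $Q$. Composing $q$ with the surjection $G \to \Ima f$ induced by $f$ yields a surjective Nash homomorphism $q \circ f : G \to Q$, exhibiting $Q$ as a positive-dimensional almost linear quotient of $G$. This contradicts the hypothesis that $G$ is toroidal, hence anti-linear, and so no such $Q$ can exist; that is, $\Ima f$ is anti-linear.

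The argument is short and poses no genuine obstacle. The only care needed is routine bookkeeping: confirming that a composition of surjective Nash homomorphisms is again a surjective Nash homomorphism and that the quotient $Q$ is genuinely realized in the Nash category, both of which are furnished by the cited results of Fang--Sun and Pillay. This is presumably why the authors regard the proof as straightforward enough to omit.
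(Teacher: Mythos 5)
Your proof is correct: the paper actually omits the proof of this proposition (declaring it straightforward), and your argument—image is a Nash subgroup by Pillay's theorem, quotient is an affine Nash group by Fang--Sun, the two are Nash-isomorphic by the first isomorphism theorem, connectedness passes to continuous images, and anti-linearity follows by composing any putative almost linear positive-dimensional quotient map with $G \twoheadrightarrow \Ima f$ to contradict anti-linearity of $G$—is exactly the routine verification the author had in mind. No gaps.
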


\begin{Lemma}\label{L:no maps}
Let $M$ be a toroidal affine Nash group, and let $f: M \rightarrow M'$ be a Nash homomorphism into an almost linear affine Nash group $M'$.
Then $f$ is constant.
\end{Lemma}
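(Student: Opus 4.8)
The plan is to show that the image $f(M)$ is simultaneously a connected quotient of the toroidal group $M$ and a Nash subgroup of the almost linear group $M'$, and that these two properties together force $f(M)$ to be trivial.

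First I would observe, via the remark on Pillay's theorem, that $\Ima f$ is a Nash subgroup of $M'$; since $M$ is connected and $f$ is continuous, $\Ima f$ is connected as well. By the first isomorphism theorem for Nash groups (using that $\ker f$ is a normal Nash subgroup and that quotients by normal Nash subgroups are again affine Nash groups), $\Ima f$ is Nash-isomorphic to the quotient $M/\ker f$, hence is in particular a connected quotient Nash group of $M$.

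Next I would verify that $\Ima f$ is almost linear. Since $M'$ is almost linear, it carries a finite-dimensional real Nash representation $\rho \colon M' \to \mathrm{GL}(V)$ with finite kernel. Restricting $\rho$ to the Nash subgroup $\Ima f$ yields a finite-dimensional real Nash representation of $\Ima f$ whose kernel, being contained in $\ker \rho$, is finite; therefore $\Ima f$ is an almost linear affine Nash group. Now I would combine the two observations: as $M$ is toroidal, it is anti-linear and admits no positive-dimensional almost linear quotient Nash group, yet $\Ima f \cong M/\ker f$ is precisely such a quotient and has just been shown to be almost linear. Hence $\Ima f$ cannot be positive-dimensional; being connected and zero-dimensional, it is trivial, so $f$ maps all of $M$ to the identity and is constant.

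The step I expect to require the most care is the verification that $\Ima f$, regarded as a Nash subgroup of $M'$, is again almost linear — specifically, that restricting the defining representation of $M'$ to a Nash subgroup produces a genuine finite-dimensional Nash representation with finite kernel. Once this is in hand, the conclusion is immediate from the definition of anti-linearity. Alternatively, one could sidestep the explicit restriction by invoking Proposition~\ref{P:image and quotient}, which already guarantees that $\Ima f$ is toroidal; then a positive-dimensional $\Ima f$ that is both toroidal and almost linear would contradict its own anti-linearity, giving the same conclusion.
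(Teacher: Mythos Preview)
Your argument is correct and is exactly the straightforward verification the paper has in mind; indeed the paper omits the proof entirely, declaring it immediate from the definitions. The only point you flag as delicate---that a Nash subgroup of an almost linear affine Nash group is again almost linear---is routine (restrict the defining Nash representation), and your alternative via Proposition~\ref{P:image and quotient} works equally well.
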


\begin{Lemma}\label{L:toroidal->complete}\label{L:image of an anti-affine}
Let $G$ be a toroidal affine Nash group. If $\phi: G \rightarrow \msf{G}$ is an algebraization, 
then $\msf{G}$ is an anti-affine algebraic group.
\end{Lemma}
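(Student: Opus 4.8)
The plan is to reduce the statement to Lemma~\ref{L:no maps} by means of the affinization morphism. Recall from the discussion preceding Theorem~\ref{T:Brion15} that $\msf{G}$ is anti-affine if and only if $\mc{O}(\msf{G})=\R$, equivalently the affinization $\msf{A}:=\mt{Spec}\ \mc{O}(\msf{G})$ is a single reduced point. Now $\msf{A}$ is an affine algebraic group over $\R$ and the canonical morphism $\alpha:\msf{G}\rightarrow \msf{A}$ is a surjective homomorphism whose kernel is $\ant{\msf{G}}$, as recalled in the excerpt. Hence it suffices to show that $\alpha$ kills the Zariski-dense subgroup $\phi(G)$: the density will then force $\msf{A}$ to be trivial.

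First I would pass to real points. Being an affine algebraic group, $\msf{A}$ embeds as a closed subgroup of some $\mt{GL}_n$, so $\msf{A}(\R)$ is a closed subgroup of $\mt{GL}_n(\R)$ and the resulting inclusion is a faithful finite-dimensional real Nash representation; thus $\msf{A}(\R)$ is an almost linear affine Nash group. The morphism $\alpha$ is defined over $\R$, so it restricts to a Nash homomorphism $\alpha_\R:\msf{G}(\R)\rightarrow \msf{A}(\R)$. Composing with the algebraization $\phi:G\rightarrow \msf{G}(\R)$, which is Nash, yields a Nash homomorphism $\psi:=\alpha_\R\circ\phi:G\rightarrow \msf{A}(\R)$ from the toroidal group $G$ into an almost linear group. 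By Lemma~\ref{L:no maps}, $\psi$ is constant, that is, $\psi(G)=\{e\}$.

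It remains to propagate this triviality back to $\msf{A}$. Since $\phi(G)$ is Zariski dense in $\msf{G}$, we have $\overline{\phi(G)}=\msf{G}$; applying the surjective, Zariski-continuous morphism $\alpha$ together with the inclusion $f(\overline{S})\subseteq \overline{f(S)}$, we obtain $\msf{A}=\alpha(\msf{G})=\alpha\bigl(\overline{\phi(G)}\bigr)\subseteq \overline{\alpha(\phi(G))}=\overline{\psi(G)}=\overline{\{e\}}=\{e\}$, the last equality holding because the identity is a closed point. Hence $\msf{A}=\{e\}$, i.e. $\mc{O}(\msf{G})=\R$, which is precisely the assertion that $\msf{G}$ is anti-affine.

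I expect the only genuinely delicate points to be the two compatibilities between the algebraic and the Nash worlds: that the affinization target $\msf{A}(\R)$ really is almost linear in the sense of the paper, which is handled by the embedding of an affine algebraic group into $\mt{GL}_n$ (alternatively, via Fact~\ref{Facts}(2)); and that $\alpha$ descends to a bona fide Nash homomorphism on real points, so that Lemma~\ref{L:no maps} applies to $\psi$. Once these are secured the remainder is purely formal, the crux being that the Zariski density of $\phi(G)$ transports the vanishing $\psi(G)=\{e\}$ into the triviality of the entire affinization $\msf{A}$.
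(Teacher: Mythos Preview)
The paper actually omits the proof of this lemma, declaring it (together with the two preceding results) ``straightforward.'' Your argument via the affinization morphism $\alpha:\msf{G}\to\mt{Spec}\,\mc{O}(\msf{G})$, the appeal to Lemma~\ref{L:no maps} for the composite $\psi=\alpha_\R\circ\phi$, and the Zariski-density transfer is correct and is precisely the kind of verification the authors presumably had in mind.
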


\begin{Corollary}\label{C:central}
Let $H$ be a toroidal affine Nash subgroup of a connected affine Nash group $G$.
Then $H$ is central; $H\subseteq Z(G)$. 
\end{Corollary}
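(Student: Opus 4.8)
The plan is to prove that a toroidal affine Nash subgroup $H$ of a connected affine Nash group $G$ is central by exploiting the conjugation action of $G$ on $H$ and invoking Lemma~\ref{L:no maps}. For each $g \in G$, conjugation $c_g : H \to H$, $h \mapsto ghg^{-1}$, is a Nash automorphism of $H$ (note $H$ is normal, or we must first argue that it is, so that this map indeed lands in $H$). The key idea is to package the whole family $\{c_g\}_{g \in G}$ into a single homomorphism from $G$ into the automorphism group of $H$, and then to show that the relevant automorphism group is almost linear so that Lemma~\ref{L:no maps} forces the action to be trivial.

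\medskip

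First I would reduce to the connected case and address normality. Since $G$ is connected, it suffices to show that every $g \in G$ centralizes $H$; I would consider the map $G \to H$, $g \mapsto g h g^{-1} h^{-1}$ for fixed $h$, or better, the adjoint-type action on the Lie algebra. Concretely, the cleanest route is to pass to algebraizations: by Fact~\ref{Facts}(1) choose an algebraization $\phi : G \to \msf{G}(\R)$ with $\msf{G}$ a real algebraic group, and by Lemma~\ref{L:image of an anti-affine} the image $\overline{\phi(H)}$ is (the real points of) an anti-affine algebraic group. By Theorem~\ref{T:Brion15}, an anti-affine algebraic subgroup is smooth, connected, and central in the identity component of its ambient group; since $\msf{G}$ is connected (as $G$ is connected and $\phi$ has dense image), $\overline{\phi(H)}$ lies in the center $\ZZ$ of $\msf{G}$. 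Then for $g \in G$ and $h \in H$ we have $\phi(g h g^{-1} h^{-1}) = \phi(g)\phi(h)\phi(g)^{-1}\phi(h)^{-1} = e$, so $g h g^{-1} h^{-1}$ lies in $\ker \phi$, which is finite.

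\medskip

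The main obstacle is bridging from ``commutators land in the finite kernel $\ker\phi$'' to ``commutators are trivial.'' Here connectedness of $G$ does the work: fix $h \in H$ and consider the Nash map $\psi_h : G \to G$ defined by $\psi_h(g) = g h g^{-1} h^{-1}$. This is continuous, $\psi_h(e) = e$, and by the above its image lies in the finite set $\ker\phi$. Since $G$ is connected and $\ker\phi$ is finite (hence discrete), $\psi_h(G)$ is a connected subset of a discrete set containing $e$, so $\psi_h \equiv e$. As $h \in H$ was arbitrary, every element of $G$ commutes with every element of $H$, i.e.\ $H \subseteq Z(G)$. This finishes the argument.

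\medskip

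Two points deserve care. First, I must ensure $H$ is normal in $G$ before speaking of conjugation as a self-map of $H$; in fact the argument above via algebraizations does not require this a priori, since it works with the images in $\msf{G}$ and only concludes centrality at the end, but if the corollary is meant to apply to an arbitrary (not assumed normal) toroidal subgroup, I would instead directly study $\psi_h(g) = ghg^{-1}$ as a map $G \to G$ and argue its image is contained in a coset structure forcing it into $H$. Second, the invocation of Theorem~\ref{T:Brion15} is over $k = \R$, where the hypotheses (perfectness) are satisfied, and one should note that real points of a central subgroup scheme are central in the real points; this is immediate since centrality is a Zariski-closed condition and $\phi(G)$ is Zariski dense.
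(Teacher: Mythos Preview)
Your argument is correct and follows essentially the same route as the paper: pass to an algebraization, use Lemma~\ref{L:image of an anti-affine} to see that the Zariski closure of the image of $H$ is anti-affine, invoke Theorem~\ref{T:Brion15} to get centrality at the algebraic level, and then pull centrality back to $G$. The paper's proof is two sentences and simply says ``the result now follows from the corresponding result for algebraic groups''; you have correctly supplied the missing step (commutators land in the finite $\ker\phi$, and the connectedness of $G$ forces the commutator map $g\mapsto ghg^{-1}h^{-1}$ to be constant), which the paper leaves implicit. Your initial detour through $\mathrm{Aut}(H)$ and Lemma~\ref{L:no maps} is unnecessary and can be dropped, and your caveat about normality is well taken: the algebraization argument does not assume $H$ normal and instead deduces centrality (hence normality) directly.
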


\begin{proof}
Apply Lemma~\ref{L:image of an anti-affine} to the inclusion homomorphism $\iota: H\hookrightarrow G$.
The result now follows from the corresponding result for algebraic groups.
\end{proof}

Next, we obtain a useful criterion for toroidalness via algebraization. 
\begin{Lemma}\label{L:converse}
Let $G$ be a connected affine Nash group and $\phi: G \rightarrow \msf{G}$ be an algebraization. 
If $\msf{G}$ is an anti-affine algebraic group, then $G$ is a toroidal affine Nash group.
\end{Lemma}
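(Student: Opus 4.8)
The plan is to verify directly that $G$ is anti-linear; since $G$ is connected by hypothesis, this already yields toroidality. Thus let $N$ be a normal affine Nash subgroup such that the quotient $Q := G/N$ is almost linear (by \cite[Proposition 4.1]{FangSun}, $Q$ is again an affine Nash group), and I aim to show $\dim Q = 0$, i.e.\ that $Q$ is finite. The idea is to transport the situation into the algebraic category, where the corresponding statement for anti-affine groups is available, using the given algebraization $\phi : G \to \msf{G}(\R)$.

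First I would manufacture an algebraization of $Q$ out of $\phi$. Let $\msf{N} \subseteq \msf{G}$ be the Zariski closure of $\phi(N)$. Since $N$ is normal in $G$, the subgroup $\phi(N)$ is normalized by the dense subgroup $\phi(G)$, hence $\msf{N}$ is normalized by $\overline{\phi(G)} = \msf{G}$, so $\msf{N}$ is a normal algebraic subgroup of $\msf{G}$. The composite $G \xrightarrow{\phi} \msf{G}(\R) \to (\msf{G}/\msf{N})(\R)$ kills $N$ (because $\phi(N) \subseteq \msf{N}$) and so descends to a Nash homomorphism $\bar\phi : Q \to (\msf{G}/\msf{N})(\R)$ whose image is Zariski dense, since $\phi(G)$ is dense in $\msf{G}$. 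I expect the main technical point to be checking that $\bar\phi$ has finite kernel, so that it is a genuine algebraization of $Q$ with algebraic model $\msf{Q} := \msf{G}/\msf{N}$. This should be a dimension count: writing $M := \phi^{-1}(\msf{N}(\R)) \supseteq N$, both $\phi(N)$ and $\phi(M)$ are Zariski dense in $\msf{N}$, so $\dim N = \dim M = \dim \msf{N}$ (using that $\phi$ has finite kernel and that a semialgebraic set and its Zariski closure have equal dimension); hence $\ker \bar\phi = M/N$ has dimension $0$ and is finite.

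With $\bar\phi$ recognized as an algebraization of $Q$, the conclusion follows from the anti-affineness of $\msf{G}$. Since $Q$ is almost linear, Fact~\ref{Facts}(2) shows that $\msf{Q} = \msf{G}/\msf{N}$ is affine. On the other hand, pulling back along the surjection $\msf{G} \to \msf{Q}$ embeds $\mc{O}(\msf{Q})$ into $\mc{O}(\msf{G})$, which equals $\R$ because $\msf{G}$ is anti-affine; therefore $\mc{O}(\msf{Q}) = \R$ and $\msf{Q}$ is anti-affine as well. An algebraic group that is simultaneously affine and anti-affine is trivial, so $\msf{Q}$ is a point; as $\bar\phi$ is an algebraization, $\dim Q = \dim \msf{Q} = 0$ and $Q$ is finite. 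Hence $G$ admits no positive-dimensional almost linear quotient, so it is anti-linear and, being connected, toroidal.
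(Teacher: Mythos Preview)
Your argument is correct and follows the same strategy as the paper: pass to the algebraic category, observe that the algebraization of an almost linear quotient of $G$ is simultaneously affine and a quotient of the anti-affine group $\msf{G}$, hence trivial. The only difference is that where the paper invokes \cite[Lemma~3.7]{FangSun} as a black box to produce a compatible algebraization $\msf{G}'$ of $G/H$ together with $\tau:\msf{G}\to\msf{G}'$, you construct this algebraization by hand as $\msf{G}/\overline{\phi(N)}^{\mathrm{Zar}}$ and verify the finite-kernel condition via the dimension count; this is precisely the content of that lemma in the case at hand, so your proof is a self-contained unpacking of the paper's citation rather than a genuinely different route.
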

\begin{proof}

Towards a contradiction under our hypothesis we assume that $G$ is not toroidal, hence it has a proper normal 
subgroup $H\lhd G$ such that $G/H$ is an almost linear Nash group. Let $\pi : G\rightarrow G/H$ denote 
the natural quotient homomorphism. 
By Lemma 3.7 of~\cite{FangSun},
there exist an algebraization $\phi' : G/H \rightarrow \msf{G}'$ and a morphism $\tau : \msf{G}\rightarrow \msf{G}'$ 
such that the following diagram commutes: 
\begin{figure}[htp]\label{F:commutative}
\begin{center}
\begin{tikzpicture}
\node at (-2.25,1) (a) {$G$};
\node at (2.25,1) (b) {$G/H$};
\node at (-2.25,-1) (c) {$\msf{G}$};
\node at (2.25,-1) (d) {$\msf{G}'$}; 
\node at (-2.55,0) {$\phi$}; 
\node at (2.55,0) {$\phi'$}; 
\node at (0,1.35) {}; 
\node at (0,-1.35) {$\tau$}; 
\draw[->,thick] (a) to (b);
\draw[->,thick] (a) to (c);
\draw[->,thick] (b) to (d);
\draw[->,thick] (c) to (d);
\end{tikzpicture}
\end{center}
\end{figure}

Since $G/H$ is almost linear, $\msf{G}'$ is affine. However, $\msf{G}$ is anti-affine and connected, therefore, $\tau$ is trivial. 
It follows that $H=G$. This contradiction shows that $G$ is toroidal. 
\end{proof}

Finally, we are ready to prove our first main theorem.

\begin{proof}[Proof of Theorem~\ref{T:Rosenlicht}]
Let $G$ be an affine Nash group, $\phi : G \rightarrow \msf{G}$ be an algebraization of $G$, 
and let $\ant{\msf{G}}\subset \msf{G}$ be the smallest normal subgroup of $\msf{G}$ such that $\msf{G}/\ant{\msf{G}}$ is affine. 
By Rosenlicht's theorem we know that $\ant{\msf{G}}$ is anti-affine. 
Let $H\subseteq G$ denote the identity component of the preimage of $\ant{\msf{G}}$ in $G$, so 
$H$ is toroidal by Lemma~\ref{L:converse}.
We claim that $H$ is the smallest normal affine Nash group of $G$ such that $G/H$ is an almost linear affine Nash group.

Since $\ant{\msf{G}}$ is normal in $\msf{G}$, its preimage $\phi^{-1}(\ant{\msf{G}})$ is normal in $G$, hence, so is $H$. 
Moreover, by Lemma 3.5~\cite{FangSun} we get an induced algebraization map 
$\tilde{\phi} : G/H \rightarrow G/\phi^{-1}(\ant{\msf{G}}) \rightarrow \msf{G} / \ant{\msf{G}}$, 
hence $G/H$ is almost linear. Thus, it remains to show that $H$ is smallest with respect to this property.

Let $K\subset G$ be another normal affine Nash subgroup of $G$ such that $G/K$ is an almost linear affine Nash group.
On one hand, $H/K\cap H$ embeds into $G/K$, hence it is an almost linear affine Nash group.
On the other hand, $H/K\cap H$ is toroidal as a quotient of $H$. 
Passing to an algebraization $\psi: H/K\cap H \rightarrow \msf{S}$, we see that $\msf{S}$ is both affine and anti-affine
by Lemma~\ref{L:converse}. Hence, it is trivial. Since $H/ H \cap K$ is connected and 
since its image is trivial under a finite map, $H/H\cap K$ is trivial also. Therefore, $H \subset K$. The proof is complete.

\end{proof}

\section{Proof of Theorem~\ref{T:Rosenlicht decomposition}}




\begin{proof}[Proof of Theorem~\ref{T:Rosenlicht decomposition}]

Let $G$ be a connected affine Nash group and $\phi : G\rightarrow \mathsf G$ be an algebraization.
By Theorem~\ref{T:Rosenlicht} we know the existence of a smallest normal affine Nash subgroup $H$ of $G$ such that $G/H$ is almost linear.
We denote this $H$ by $G_{\mathrm{ant}}$.

Let $\aff{G}$ denote the identity connected component of the preimage of $\aff{\msf{G}}$ under $\phi$.
Here, $\aff{\msf{G}}$ is the largest connected normal affine subgroup of $\msf{G}$ whose existence is guaranteed by 
the Chevalley's structure theorem (Theorem~\ref{T:Chevalley}). 
Since $\aff{\msf{G}}$ is affine and normal in $\msf{G}$, the identity connected component $\aff{G}$ of $\phi^{-1}(\aff{\msf{G}}(\R))$ 
is normal and almost linear as well (for the latter we use Fact~\ref{Facts} (3)).

Morover, the  map $G/G_{\mathrm{aff}}\rightarrow \mathsf G/ \mathsf G_{\mathrm{aff}}$ induced by $\phi$ is an algebraization of $G/G_{\mathrm{aff}}$, and the  map $G/G_{\mathrm{ant}}\rightarrow \mathsf G/ \mathsf G_{\mathrm{ant}}$ induced by $\phi$ is an algebraization of $G/G_{\mathrm{ant}}$.Thus $G/G_{\mathrm{aff}}$ is an abelian Nash manifold and $G/G_{\mathrm{ant}}$ is an almost linear Nash group.

Let $K$ be a connected Nash subgroup of $G$. If $K$ is almost linear, then the natural homomorphism $K\rightarrow G/G_{\mathrm{aff}}$ is trivial, and thus $K\subset G_{\mathrm{aff}}$. This shows that $G_{\mathrm{aff}}$ is the largest connected almost linear Nash subgroup of $G$. Similarly, if $K$ is toroidal, then the natural homomorphism $K\rightarrow G/G_{\mathrm{ant}}$ is trivial, and thus $K\subset G_{\mathrm{ant}}$. This shows that $G_{\mathrm{ant}}$ is the largest toroidal Nash subgroup of $G$.

Since both $\mathsf{G}_{\mathrm{aff}}$ and $\mathsf{G}_{\mathrm{ant}}$ are normal in $\mathsf G$, both $G_{\mathrm{aff}}$ and $G_{\mathrm{ant}}$ are normal in $G$.
By comparing the Lie algebras, we know that $G=G_{\mathrm{aff}} G_{\mathrm{ant}}$.

Let $(G_{\mathrm{ant}})_{\mathrm{aff}}$ denote the  identity connected component of $\phi^{-1}((\mathsf{G}_{\mathrm{ant}})_{\mathrm{aff}}(\mathbb R))$. It is the largest connected almost linear Nash subgroup of $G_{\mathrm{ant}}$ by the above argument. It is clear that
$(G_{\mathrm{ant}})_{\mathrm{aff}}$ is a Nash subgroup of $G_{\mathrm{aff}}\cap G_{\mathrm{ant}}$. These two groups have the same Lie algebra since $(\mathsf{G}_{\mathrm{ant}})_{\mathrm{aff}}$ and $\mathsf{G}_{\mathrm{aff}}\cap\mathsf{G}_{\mathrm{ant}}$
 have the same Lie algebra.Thus $(G_{\mathrm{ant}})_{\mathrm{aff}}$ is a Nash subgroup of $G_{\mathrm{aff}}\cap G_{\mathrm{ant}}$ of finite index.

\end{proof}

\end{document}